\documentclass[11pt]{amsart}

\usepackage{amsmath,amssymb,amsthm,tikz-cd}
\usepackage{mathtools}
\usepackage{graphicx}
\usepackage[nameinlink]{cleveref}
\usepackage{parskip}

\newtheorem{thm}{Theorem}[section]

\newtheorem*{introthm*}{Theorem}

\newtheorem*{lem*}{Lemma}
\newtheorem{prop}[thm]{Proposition}

\theoremstyle{definition}

\theoremstyle{remark}
\newtheorem{rem}[thm]{Remark}

\def\Q{\mathbb{Q}} 
\def\Z{\mathbb{Z}} 
\def\C{\mathbb{C}} 

\def\P{\mathbb{P}}
\def\fpphat{\widehat{\mathrm{FPP}}}
\def\fppa{\mathrm{FPP}_1}
\def\fppb{\mathrm{FPP}_2}
\def\fppc{\mathrm{FPP}_3}

\title{Explicit equations of three pairs of fake projective planes}
\author{Lev Borisov}
\author{Nathan Mann}

\begin{document}

\begin{abstract}
We find explicit polynomial equations in their bicanonical embeddings into $\C\P^9$ for the pairs of fake projective planes labeled by $(C18,p=3,\{2\},D_3), (C18,p=3,\{2\},(dD)_3)$ and $(C18,p=3,\{2\},(d^2D)_3)$ in the Cartwright-Steger classification. We do this by starting with the equations of the commensurable fake projective plane labeled by $(C18,p=3,\{2I\})$ that were found in \cite{B-Wang}.
\end{abstract}

\maketitle

\section{Introduction}
Complex surfaces are classified according to their Kodaira dimension $\kappa$, which measures the growth of the dimensions of the spaces of global sections of their pluricanonical bundles. Surfaces with $\kappa=2$ are called \textit{surfaces of general type}, and are the least understood of the complex surfaces. A fascinating  example of $\kappa=2$ surfaces are the so-called \textit{fake projective planes} (FPPs), defined to be surfaces with the Hodge diamond
\[
\begin{array}{ccccc}
&&1&&\\
&0&&0&\\
0&&1&&0\\
&0&&0&\\
&&1&&
\end{array}
\]
(same as that of $\C\P^2$) that are not biholomorphic to $\C\P^2$. 
The work of Donald Cartwright and Tim Steger (\cite{CS},\cite{CSdata}) provides a complete classification of all FPPs up to biholomorphism as complex ball quotients. There are $100$ FPPs and they come in complex-conjugate pairs, for a total of $50$ pairs.  However, a ball quotient description does not immediately give rise to explicit polynomial equations.  The reader is referred to \cite{BK} for more details of the history of fake projective planes.

\medskip
Up until now, explicit equations have been found for $12$ out of the $50$ pairs of FPPs. In this paper, we find explicit equations of three more pairs, labeled $(C18,p=3,\{2\},D_3)$, $(C18,p=3,\{2\},(dD)_3)$, and $(C18,p=3,\{2\},(d^2D)_3)$ in the Cartwright-Steger classification, which we will denote  by $\fppa$, $\fppb$, and $\fppc$ respectively to shorten the notation. As a starting point, we use the FPP labeled $(C18,p=3,\{2I\})$, whose explicit equations were found in \cite{B-Wang}, and which is commensurable with all three FPP pairs we wish to find equations for. We will denote this FPP with known equations as $\fpphat$, to be consistent with the notations of \cite{B-Wang}.

\medskip 
This short paper describes in detail the process we used to find the equations, and the equations themselves are available in ancillary files. Some of our methods may prove useful for similar projects.

\medskip 
{\bf Acknowledgments.}
This work was partially carried out while Nathan Mann was a participant in the 2026 DIMACS REU program at Rutgers University, NSF grant CCF-2447342, supported by the Rutgers Math Department.

\section{Details of the computation}
\subsection{Commensurability.}
The commensurability relationship between the fake projective planes $\fpphat$, $\fppa$, $\fppb$ and $\fppc$ is encoded in the following proposition.

\begin{prop}\label{18cover}
 There is a smooth ball quotient surface $18\fpphat$ with the action of the group $G=C_3\times C_3\times S_3$ of order $54$ such that $\fpphat$ is a free quotient of $18\fpphat$ by the nonnormal subgroup $C_3\times C_3\times C_2$ of order $18$, and such that $\fppa$, $\fppb$, and $\fppc$ are the free quotients by the normal subgroups $\langle(0,1)\rangle\times S_3$, $\langle(1,1)\rangle\times S_3$, and $\langle(2,1)\rangle\times S_3$ of order $18$, respectively.
\end{prop}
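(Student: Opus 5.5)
The plan is to work entirely on the group-theoretic side of the Cartwright--Steger classification. Each of $\fpphat$, $\fppa$, $\fppb$, $\fppc$ is a ball quotient $\Pi\backslash B^2$ with $\Pi$ torsion free, and all of them lie in the same maximal arithmetic lattice $\bar\Gamma$ for the class $(C18,p=3)$. For the class $\{2I\}$, $\Pi_{\widehat{}}$ is a subgroup of $\bar\Gamma$ itself. For the class $\{2\}$, the three groups $\Pi_1,\Pi_2,\Pi_3$ are index-$3$ subgroups of an intermediate lattice $\Gamma$, which has index $3$ in a larger group. These are the subgroups labeled $D_3$, $(dD)_3$ and $(d^2D)_3$, and they are the kernels of three distinct homomorphisms $\Gamma\to C_3$. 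Using the explicit generators and relations of \cite{CSdata}, I would take
\[
N=\Pi_{\widehat{}}\cap\Pi_1\cap\Pi_2\cap\Pi_3 ,
\]
or more economically the largest subgroup normal in a suitable overgroup $\tilde\Gamma$ that contains all four lattices. I would then check that $N$ is torsion free, which makes $18\fpphat=N\backslash B^2$ a smooth ball quotient, and set $G=\tilde\Gamma/N$.

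The key steps, in order, are as follows.
\begin{enumerate}
\item Find a common overgroup $\tilde\Gamma$ containing $\Pi_{\widehat{}}$ and all $\Pi_i$ with $[\tilde\Gamma:\Pi]=18$ for each of them. Equal volumes force these four indices to agree, and the Euler characteristic of $18\fpphat$ must then be $54\cdot 3=162$.
\item Compute the normal core $N$ of $\Pi_{\widehat{}}$ in $\tilde\Gamma$, for instance by coset enumeration on $\tilde\Gamma/\Pi_{\widehat{}}$ (Todd--Coxeter in Magma/GAP), and obtain $G=\tilde\Gamma/N$ of order $54$.
\item Identify $G\cong C_3\times C_3\times S_3$. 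I would do this by checking that the center of $G$ is $C_3\times C_3$ and that the abelianization is $C_3\times C_3\times C_2$.
\item Verify that the images of the $\Pi_i$ in $G$ are the three normal subgroups $\langle (a,1)\rangle\times S_3$ with $a=0,1,2$. These are exactly the kernels of the three maps $G\to C_3$ that factor through $C_3\times C_3$ and are nontrivial on the second factor. Verify also that the image of $\Pi_{\widehat{}}$ is a non-normal subgroup $C_3\times C_3\times C_2$.
\end{enumerate}

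Freeness comes for free once $N$ is torsion free. Each quotient $N\backslash B^2\to\Pi\backslash B^2$ is unramified because $\Pi$ is torsion free, and then $\Pi/N$ acts freely.

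The main obstacle is step 1 together with the bookkeeping in step 4. Cartwright--Steger present the $\{2I\}$ and $\{2\}$ lattices through different chains of subgroups, and their presentations must be matched inside a single concrete group. My first attempt would be to take $\tilde\Gamma=\bar\Gamma$ with its explicit presentation from \cite{CSdata} and express generators of each $\Pi$ as words in it. If the indices come out as $[\bar\Gamma:\Pi]=18$ for all four lattices, then $\tilde\Gamma=\bar\Gamma$ and the rest is a finite computation in a permutation group of degree $18$. If some index differs, I would instead compute the normalizer of the intersection of the four lattices. The labeling $D_3$, $(dD)_3$, $(d^2D)_3$ must then be matched with $a=0,1,2$ by checking which element $d$ acts between them, which fixes the identification of the $C_3\times C_3$ factor.
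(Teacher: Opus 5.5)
You take essentially the paper's route, a finite computation with the Cartwright--Steger presentations (the paper's proof only cites GAP computations). As written, though, your plan would not produce $18\fpphat$.

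\textbf{Index bookkeeping.} Since $|G|=54$ and the relevant subgroups have order $18$, you need $[\tilde\Gamma:\Pi]=3$ for all four lattices and $[\Pi:N]=18$. Hence $e(18\fpphat)=18\cdot 3=54$. Your values $18$ and $162$ are wrong, so you would be searching for an overgroup of the wrong index.

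\textbf{Your recipes for $N$ give the wrong group.} The image of $\widehat\Pi$ in $G$ is $C_3\times C_3\times C_2$, whose core in $G$ is the center $C_3\times C_3$. So $\tilde\Gamma$ acts on $\tilde\Gamma/\widehat\Pi$ only through $S_3$. The normal core of $\widehat\Pi$ therefore gives the \'etale double cover of $\fpphat$ (Euler number $6$), not $18\fpphat$. Likewise, $\widehat\Pi\cap\Pi_1\cap\Pi_2\cap\Pi_3$ is the preimage of $1\times C_2$ and gives $9\fpphat$. Only your alternative is correct, namely the normal core of that intersection in $\tilde\Gamma$. The intrinsic description is $N=\ker(\widehat\Pi\to C_3\times C_3\times C_2)$, which corresponds to the fiber product of $9\fpphat$ and the double cover, exactly as in the paper's construction.

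\textbf{Step 1 skips the real difficulty.} The four lattices belong to different Cartwright--Steger classes ($\{2I\}$ versus $\{2\}$) with separate presentations. That suitable conjugates sit in one $\tilde\Gamma$ in the stated configuration is precisely what the proposition asserts. Intersecting given copies inside some common $\bar\Gamma$ depends on a choice of conjugates you never address. Your fallback of normalizing the intersection also fails, because the intersection is not normal in $\tilde\Gamma$.

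The workable order is the reverse:
\begin{enumerate}
\item Build $N$ from $\widehat\Pi$ alone.
\item Inside $\mathrm{Norm}(N)/N$, computed in an arithmetic lattice containing $\widehat\Pi$, find a group $G$ of order $54$ containing $\widehat\Pi/N$.
\item List the four normal subgroups of $G$ of order $18$. Check that exactly three have torsion-free preimages; the fourth, $\langle(1,0)\rangle\times S_3$, does not.
\item Match those three with $D_3$, $(dD)_3$, $(d^2D)_3$ in the Cartwright--Steger data.
\end{enumerate}

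\textbf{A smaller slip.} The three relevant kernels come from maps $C_3\times C_3\to C_3$ that are nontrivial on the \emph{first} factor. Your condition (nontrivial on the second factor) picks out $\langle(1,0)\rangle$, which gives the singular quotient, and misses $\langle(0,1)\rangle$.

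Your identification of $G$ through its center is fine, because central extensions of $S_3$ by $C_3\times C_3$ split ($H^2(S_3,\Z/3)=0$). Your freeness argument is also fine.
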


\begin{proof}
This follows from computations using the Cartwright-Steger GAP files together with additional GAP computations. The reader can see the ancillary file GAPdata \cite{B-M+} for details.
\end{proof}

\begin{rem}\label{fixed}
Note that taking the quotient by the group  $\langle(1,0)\rangle\times S_3$ gives a singular surface.
\end{rem}

\begin{rem}\label{twins}
It was impossible for us to distinguish between the pairs  $\fppb$ and $\fppc$ as they are Galois conjugates of each other. This situation is similar to the ``twin" FPPs $(C2,p=2,\emptyset,(dD)_3 X_3)$ and $(C2,p=2,\emptyset,(d^2D)_3 X_3)$ whose equations were found in \cite{BF}, where it was also unknown which of the two pairs of Galois conjugate equations corresponds to which FPPs. These pairs of surfaces have isomorphic algebraic fundamental groups with non-isomorphic fundamental groups, see \cite{Stover}.
\end{rem}

Proposition \ref{18cover} 
gives rise to the diagram
\[\begin{tikzcd}
	& 18\fpphat && \\
	&& {9\fpphat} \\
	{\fppa} & {\fppb} & {\fppc} & {\fpphat}
	\arrow[from=1-2, to=2-3]
	\arrow[from=1-2, to=3-1]
	\arrow[from=1-2, to=3-2]
	\arrow[from=1-2, to=3-3]
	\arrow[from=2-3, to=3-4]
\end{tikzcd}\]
where the surface $9\fpphat$ is a quotient of $18\fpphat$ by the involution in $S_3$. This is an unramified $C_3\times C_3$ Galois cover of $\fpphat$, and it already appeared in the calculations of \cite{B-Wang}. 

\subsection{Finding the $2$-torsion element of the Picard group of $\fpphat$.} The key first step in constructing the surface  $18\fpphat$ is finding an unramified double cover of $\fpphat$, which we know exists from the group-theoretic data. Then $18\fpphat$  is the fiber product over $\fpphat$ of $9\fpphat$ and this double cover.
The double cover in question corresponds to a torsion divisor $T$ on $\fpphat$. The square of the section $s\in H^0(\fpphat, K+T)$ is a linear combination of the ten sections  $(Y_0,\dots,Y_9)$ of $H^0(\fpphat, 2K)$. Recall that we know from \cite{B-Wang} the $84$ cubic equations in $Y_i$ which cut out $\fpphat$. The coefficients of these equations lie in $\Z[\sqrt{-2}]$.

\smallskip
We first found a non-reduced cut of the reduction $X_{17}$  of $\fpphat$ modulo $17$ by an exhaustive search, which we  split into two steps. First, we found about $300$ points on 
$X_{17}$. We then computed their tangent spaces and used them to quickly discard the majority of possible cuts in $\mathbb{F}_{17}\P^9$. Indeed, a non-reduced cut must contain the tangent spaces at every point of $X_{17}$ that it passes through. This was a very simple code in C which ran fast (we did not even have to parallelize it), which allowed us to reduce the list of potential cuts to about a thousand possibilities. The surviving cuts were then run through Magma to see if the cut is indeed non-reduced.  We chose $17$ as it was the smallest prime $p$ where  $\sqrt{-2}$ was defined in $\mathbb{F}_p$, and where the Hilbert polynomial of the reduction of $\fpphat$ modulo $p$ was the correct one. In retrospect, it was lucky that there were sufficiently many points defined over $\mathbb{F}_{17}$ and that we did not have to go to a larger prime. 

\smallskip
As expected, we found exactly one non-reduced cut over $\mathbb{F}_{17}$, namely
$$Y_0 + 4Y_1 + 10Y_2 + 16Y_3 + 2Y_4 + Y_5 + 14Y_6 + 2Y_7 + 8Y_8 + 12Y_9.$$
Next, we lifted the equations for the cut  to an algebraic number field by successive lifts to powers of $17$, up to $17^{60}$, to obtain a $17$-adic approximation of the equation of the cut. 
We then used Mathematica to recognize the $17$-adic approximations as algebraic numbers, getting an equation for the cut defined over $\Q(\sqrt{-2})$. The lifting procedure and subsequent recognition of algebraic numbers followed the method of \cite{B-Aut21}. 
The explicit equation for the cut  was found to be
$$
\scriptsize
\begin{array}{l}
Y_0+\frac{772+230i\sqrt2}{1539}Y_1+\frac{490092282515+172394599619i\sqrt2}{74626445502}Y_2
+\frac{-103321297235596-53009187345521i\sqrt2}{8291827278}Y_3 \\[1em]
+\frac{-231232791152-281704162355i\sqrt2}{74626445502}Y_4
+\frac{-129148133758-116323978837i\sqrt2}{51044488723368}Y_5
\\[1em]
+\frac{-442659985194376532-470633182584423089i\sqrt2}{134209873944}Y_6 
+\frac{24706724315212555+23536035499434854i\sqrt2}{882959697}Y_7
\\[1em]
+\frac{1344530700471761+7054273282673287i\sqrt2}{92943126}Y_8
+\frac{-62544241547336555633+28178108663648971918i\sqrt2}{3145055128201595952}Y_9
\end{array}
$$
and we checked in Magma that it indeed gave a non-reduced subscheme of $\fpphat$. 

\subsection{Finding points on $18\fpphat$}
The space $H^0(18\fpphat,K)$ is of dimension $17$, with the subgroup $C_3\times C_3\times C_2$ acting with every non-trivial character having multiplicity one. We denote the eigenvectors of this action by $W_{ijk}$ where $i$ and $j$ are taken modulo $3$ and $k$ is taken modulo $2$. The $C_2$-invariant subspace spanned by $W_{ij0}$ has appeared in \cite{B-Wang} as the space of canonical sections of $9\fpphat$.  Note that products 
\begin{equation}\label{WiWj}
W_{100}W_{200},~W_{010}W_{020},~W_{110}W_{220},~W_{120}W_{210} 
\end{equation}
descend to elements of $H^0(\fpphat,2K)$ and their expressions in terms of $Y_i$ can be deduced from  \cite{B-Wang}. We then found the images of $W_{ij0}=0$ on $\fpphat$ by looking at the irreducible components of these cuts, see the Mathematica file in \cite{B-M+}.

\begin{rem}
The curve $W_{001}=0$ is precisely the reduction of the aforementioned non-reduced cut. 
\end{rem}

To describe the process of constructing points on the cover, consider a section of $H^0(\fpphat,2K)$, given by quadratic polynomials in $Y_i$ that vanishes on $W_{001}=0$, $W_{210}=0$, $W_{020}=0$. This section is unique up to scaling and we know that it is equal to the product
$$
W_{101} W_{001} W_{210} W_{020}
$$
under the identification of $H^0(\fpphat,4K)$ with the $(C_3\times C_3\times C_2)$-invariant part of $H^0(18\fpphat,4K)$.
This allows us to find the image of the curve $W_{101}=0$ on $\fpphat$. In a similar manner, we found all seventeen curves $W_{ijk}=0$, in the sense of computing enough points on these curves with high accuracy.

\smallskip
Once we know these curves, we are able to compute various invariant products of $W_{ijk}$, up to scaling. In addition to the non-reduced cut $W_{001}^2$ and the four products \eqref{WiWj}
we calculated 
\begin{equation}\label{coverprod}
\begin{array}{l}
W_{100} W_{110}^2 W_{010},~  W_{100}^2 W_{110} W_{020},~   W_{200}^2 W_{210} W_{020}, ~W_{200} W_{210}^2 W_{010},
\\
W_{021}W_{011},   ~ W_{221} W_{001} W_{220}^2,~W_{101} W_{001} W_{100}^2,
~W_{201} W_{001} W_{200}^2 ,
\\
 W_{211} W_{001} W_{210}^2,
~   W_{011} W_{001} W_{010}^2,~   W_{111} W_{001} W_{110}^2 , ~W_{121} W_{001} W_{120}^2.
\end{array}
\end{equation}
This allowed us to to find for any high accuracy point $p$ on $\fpphat$ the values of $W_{ijk}$ on the preimages of $p$ on $18\fpphat$, up to common scaling.

\subsection{Finding the group action.} We know from the group theory calculations that the $C_2$ action on $18\fpphat$ can be extended to an action of $S_3$. In view of Remark \ref{fixed}, we knew which group elements have fixed points, which allowed us to understand the representation of $C_3\times C_3\times S_3$ on $H^0(18\fpphat,K)$. To be precise, we observed that it has $6$ two-dimensional representations ${\rm Span}(W_{ij0},W_{ij1})$ for $i\neq 0$ and $5$ 
one-dimensional representations spanned by five sections $W_{0jk}$. As a result, we were looking for an order three element that acts by 
\begin{equation}\label{action}
(W_{ij0},W_{ij1} )\mapsto (-\frac 12 W_{ij0} +\frac {\sqrt{-3}}2 c_{ij}  W_{ij1} ,\frac  {\sqrt{-3}}2  c_{ij}^{-1} W_{ij0} -\frac 12  W_{ij1})
\end{equation}
for some $c_{ij}$. We found the six unknown constants $c_{10},\ldots, c_{22}$ by requiring that the above action preserved the space of quadratic relations among $W_{ijk}$. We then scaled $W_{ijk}$ so that the action is nice, i.e. all $c_{ij}$ are equal to $1$.
 
\begin{rem}\label{Galois}
When making our choices of the scaling and computing points on the cover, we were careful to work in the field $\Q(\sqrt{-2},\sqrt{-3})$. Moreover, in picking the scaling for the products of $W_{ijk}$ in \eqref{coverprod} we were careful to ensure that there was a Galois action on $W$-s that switches $W_{1jk}$ and $W_{2jk}$.
It would also be acting on coefficients of all equations by the Galois automorphism of $\Q(\sqrt{-2},\sqrt{-3})$ that fixes $\sqrt{-2}$ and switches the sign of $\sqrt{-3}$.
\end{rem}

\subsection{Computing new FPPs.} Generally speaking, taking a quotient is easier than finding a cover, as one simply needs to find the ring of invariants. We can easily find $8$ linearly independent sections of $H^0(\fppa, 2K)$ as the following invariant quadratic polynomials in $W$.
\begin{equation*}\label{eight}
\scriptsize
\begin{array}{l}
W_{001}^2, W_{011} W_{021}, 
\frac 12 (-W_{120} W_{210} + W_{121} W_{211}) + 
\frac 12 (-W_{220} W_{110} + W_{221} W_{111}),
\\[.5em]
 \frac 12(-W_{100} W_{200} +  W_{101} W_{201}),
\frac 12(-W_{200}^2 + W_{201}^2), \frac 12(-W_{210} W_{220} + W_{211} W_{221}),
\\[.5em]
\frac 12 (-W_{100}^2 + W_{101}^2), \frac 12 (-W_{110} W_{120} 
+ W_{111} W_{121})
\end{array}
\end{equation*}
Unfortunately, the canonical ring of $18\fpphat$ was not generated in degree one, so it was not possible to write a basis of $H^0(\fppa, 2K)$ in terms of quadratic polynomials in $W_{ijk}$. In order to get additional basis elements of $H^0(\fppa, 2K)$ we expressed them as the ratios $F(W)/W_{001}$ where the cubic polynomial $F(W)$ vanishes on $W_{001}=0$. To achieve this, we had to compute points on $18\fpphat$ whose images in $\fpphat$ lie on a non-reduced cut. This presented a minor difficulty, since our process of finding points on the cover involved dividing by $W_{001}$. This was resolved by computing points on $W_{001}$ to second order,  see the Mathematica file \cite{B-M+} for details.

\smallskip
We were consequently able to compute the space of sections of $H^0(\fppa, 2K)$ and find $84$ cubic relations on them. However, the equations were defined over $\Q(\sqrt{-2},\sqrt{-3})$, and we expected to be able to do it over the quadratic field $\Q(\sqrt{-2})$. The issue here was that we had our basis elements lie in the eigenspaces of the $C_3$-action that was inherited from $18\fpphat$. Once  we made sure that the basis elements were fixed under the Galois action of Remark \ref{Galois} we obtained equations with the coefficients in the correct field. 

\smallskip
To find equations of $\fppb$ and $\fppc$, we followed a similar procedure. As a technical note, we considered simultaneously two bases for $H^0(\fpphat_2,2K)$ and $H^0(\fpphat_3,2K)$ which were mapped to each other by the Galois action described in Remark \ref{Galois}. Then for each coefficient of the resulting equations we had numerical approximations of both itself and its conjugate in $\Q(\sqrt{-2},\sqrt{-3})$, which allowed us an easier way of recognizing them  as algebraic numbers. 

\smallskip
Since the process involved working with some random points, it was important to verify that the resulting equations give FPPs. By now this is standard, and our results passed this test. As was mentioned in Remark \ref{twins}, our method does not allow us to determine which of the Galois conjugates of the equations over $\Q(\sqrt{-2},\sqrt{-3})$ corresponds to which index in the Cartwright-Steger classification.

\begin{rem}
The methods of this paper may be useful in cleaning up a minor issue in \cite{BF}. Two of the FPPs considered there had their equations with coefficients in a larger than optimal field. It appears likely that this can be now rectified.
\end{rem}

\end{document}